\newcommand{\vtx}[1]{*+[o][F-]{\scriptscriptstyle #1}} 
\newcounter{num}[section] %
\newenvironment{theo}
{\refstepcounter{num}%
\bigskip\noindent{\bf Theorem~\arabic{section}.\arabic{num}. }\it}
\newenvironment{cor}
{\refstepcounter{num}%
\bigskip\noindent{\bf Corollary~\arabic{section}.\arabic{num}. }\it}
\newenvironment{lemma}
{\refstepcounter{num}%
\bigskip\noindent{\bf Lemma~\arabic{section}.\arabic{num}. }\it}
\newenvironment{remark}
{\refstepcounter{num}%
\bigskip\noindent{\bf Remark~\arabic{section}.\arabic{num}.}}
\newcommand{\Ref}[1]{(\ref{#1})}
\newcounter{thepic}
\newenvironment{eq}{\begin{equation}}{\end{equation}}
\newcommand{\si}{\sigma}
\newcommand{\al}{\alpha}
\newcommand{\be}{\beta}
\newcommand{\ga}{\gamma}
\newcommand{\de}{\delta}
\newcommand{\LA}{\langle}
\newcommand{\RA}{\rangle}
\newcommand{\ov}[1]{\overline{#1}}
\newcommand{\un}[1]{{\underline{#1}} }
\newcommand{\tr}{\mathop{\rm tr}}
\newcommand{\mdeg}{\mathop{\rm mdeg}}
\newcommand{\Char}{\mathop{\rm char}}
\newcommand{\Sp}{S\!p}
\newcommand{\FF}{{\mathbb{F}}}   
\newcommand{\NN}{{\mathbb{N}}}
\newcommand{\algA}{\mathcal{A}}    
\newcommand{\algB}{\mathcal{B}}    
\newcommand{\X}{\LA X\RA}
\newcommand{\EX}{\LA \widetilde{X}\RA}
\newcommand{\Q}{\mathcal{Q}}    
\newcommand{\loopR}[3]{%
\begin{picture}(20,0)(#1,#2)
\put(-2,1){\llap{$\scriptstyle #3$}} \put(11,3){\circle{20}} \put(20,6){\vector(1,-4){1}}
\end{picture}}
\newcommand{\loopL}[3]{%
\begin{picture}(20,0)(#1,#2)
\put(22,1){$\scriptstyle #3$} \put(9,3){\circle{20}} \put(0,6){\vector(-1,-4){1}}
\end{picture}}
\newcommand{\PhiLarge}[1]{\widetilde{\Phi}_{#1}}  
\newcommand{\PhiAbs}[1]{\widehat{\Phi}_{#1}}  
\begin{document}
\renewcommand{\refname}{References}
\thispagestyle{empty}

\title{Identities for matrix invariants of the symplectic group}%
\author{{Artem A. Lopatin}}%
\noindent\address{\noindent{}Artem A. Lopatin  
\newline\hphantom{iiii} Sobolev Institute of Mathematics, 
\newline\hphantom{iiii} Siberian Branch of the Russian Academy of Sciences 
\newline\hphantom{iiii} (IM SBRAS). 
\newline\hphantom{iiii} Pevtsova street, 13,
\newline\hphantom{iiii} 644043, Omsk, Russia.
\newline\hphantom{iiii} http://www.iitam.omsk.net.ru/\~{}lopatin}%
\email{artem\underline{ }lopatin@yahoo.com}%

\vspace{1cm}
\maketitle {\small
\begin{quote}
\noindent{\sc Abstract. } The general linear group acts on the space of several linear maps on the vector space as the basis change. Similarly, we have the actions of the orthogonal and symplectic groups. Generators and identities for  the corresponding polynomial invariants over a characteristic zero field were described by Sibirskii, Procesi and Razmyslov in 1970s. In 1992 Donkin started to transfer these results to the case of infinite fields of arbitrary characteristic. We completed this transference for fields of odd characteristic  by establishing identities for the symplectic matrix invariants over infinite fields of odd characteristic.     
\medskip

\noindent{\bf Keywords: } invariant theory, polynomial invariants, classical linear groups, polynomial identities.

\noindent{\bf 2010 MSC: } 13A50; 16R30; 16G20.
\end{quote}
}

\section{Introduction}\label{section_intro}

We work over an infinite field  $\FF$ of arbitrary characteristic $p=\Char{\FF}$. All vector spaces, algebras and modules are over $\FF$ and all algebras are associative with unity unless otherwise stated.

Consider a group $G$ from the list $GL(n)$, $O(n)=\{A\in \FF^{n\times n}\,|\,A A^T =E\}$, $SO(n)=\{A\in O(n)\,|\,\det(A)=1\}$,  $\Sp(n)=\{A\in \FF^{n\times n}\,|\,AA^{\ast}=E\}$,  where we assume that $p\neq2$ in case $G\in\{O(n),SO(n)\}$ and $n$ is even in case $G=\Sp(n)$. Here $\FF^{n\times n}$ is the space of $n\times n$ matrices over $\FF$ and $A^{\ast}=-J A^T J$ is the {\it symplectic} transpose of $A$, where   %
$J=J_n=\left(
\begin{array}{cc}
0& E \\
-E& 0\\
\end{array}
\right)$ is the matrix of the non-degenerate skew-symmetric bilinear form. The group $G$ acts on the space $V=(\FF^{n\times n})^{\oplus d}$ by the diagonal conjugation: 
$$g\cdot (A_1,\ldots,A_d)=(gA_1g^{-1},\ldots,gA_dg^{-1})$$
for $g\in G$ and $A_1,\ldots,A_d$ in $\FF^{n\times n}$. The coordinate ring of $V$, i.e. the algebra of all polynomial maps $V\to \FF$, is the polynomial ring 
$$R=\FF[x_{ij}(k)\,|\,1\leq i,j\leq n,\, 1\leq k\leq d]$$ 
in $n^2 d$ variables, where $x_{ij}(k)$ sends $(A_1,\ldots,A_d)$ to the $(i,j)^{\rm th}$ entry of $A_k$. 
The algebra of {\it matrix $G$-invariants} is the set of all polynomial maps $f\in R$ that are constants on $G$-orbits of $V$, i.e., $f(g\cdot v)=f(v)$ for all $g\in G$ and $v\in V$. We denote this algebra by $R^G$.


For $f\in R$ denote by $\deg{f}$ its {\it degree} and by $\mdeg{f}$ its {\it multidegree}, i.e., $\mdeg{f}=(t_1,\ldots,t_d)$, where $t_k$ is the total degree of the polynomial $f$ in $x_{ij}(k)$, $1\leq i,j\leq n$, and $\deg{f}=t_1+\cdots+t_d$.  By the Hilbert--Nagata Theorem on invariants, each of the considered algebras of invariants $R^G$ is a finitely generated 
algebra. The algebra $R^G$ also have $\NN_0$-grading by degrees and $\NN_0^d$-grading by multidegrees, where $\NN_0$ stands for non-negative integers. Denote by $D(R^G)$ the maximal degree of elements of a minimal (by inclusion) $\NN^d$-homogeneous set of generators (m.h.s.g.)~for $R^G$. 

Generating sets for the considered algebras of invariants are known (see Section~\ref{section_generators}) as well as relations between generators for $R^{GL(n)}$ and $R^{O(n)}$ (see Section~\ref{section_identities}). In case $p=0$ relations between generators for $R^{\Sp(n)}$ are also known (see~\cite{Procesi76}).  The key difference between the case of zero and positive characteristic is the following property obtained in~\cite{DKZ02}:
\begin{eq}\label{eq_weyl}
D(R^{GL(n)})\to\infty \text{ as }d\to \infty \text{ if and only if }0<p\leq n.
\end{eq}

In this paper we describe the ideal of relations for $R^{Sp(n)}$ over a field of odd characteristic (see Theorem~\ref{theo_relationsSp} below). As a corollary, working under assumption that $p\neq2$ we extend the property~\Ref{eq_weyl} to the cases of $R^{O(n)}$ and $R^{\Sp(n)}$ (see Theorem~\ref{theo_weyl}).





  
Throughout this paper we write $\NN_0$ for non-negative integers and $\NN$ for positive integers. Given $\un{t}\in\NN_0^v$, we denote $t_1+\cdots+t_u$ by $|\un{t}|$.

\section{Generators}\label{section_generators}

To formulate the result describing generators of the algebra $R^G$, we introduce the following notations. The ring $R$ is generated by the entries of $n\times n$ {\it generic} matrices $X_k=(x_{ij}(k))_{1\leq i,j\leq n}$ ($1\leq k\leq d$). 


Consider an arbitrary $n\times n$ matrix $A=(a_{ij})$ over some commutative ring. Denote coefficients in the characteristic polynomial of $A$ by $\sigma_t(A)$, i.e., %
$$\det(\lambda E - A)=\sum_{t=0}^{n} (-1)^t\lambda^{n-t}\sigma_t(A).$$
So, $\sigma_0(A)=1$, $\sigma_1(A)=\tr(A)$ and $\sigma_n(A)=\det(A)$. 
 


Part~(a) of the following theorem was proven by Donkin~\cite{Donkin92a}, parts~(b), (c) by Zubkov~\cite{Zubkov99}.

\begin{theo}\label{theo_intro1} The algebra of matrix $G$-invariants $R^G$ is generated by the following elements:
\begin{enumerate}
 \item[(a)] $\si_t(A)$ ($1\leq t\leq n$ and $A$ ranges over all monomials in $X_1,\ldots,X_d$), if $G=GL(n)$; 

 \item[(b)] $\si_t(B)$ ($1\leq t\leq n$ and $B$ ranges over all monomials in $X_1,\ldots,X_d$, $X_1^T,\ldots,X_d^T)$, if $G=O(n)$; 

 \item[(c)] $\si_t(C)$ ($1\leq t\leq n$ and $C$ ranges over all monomials in $X_1,\ldots,X_d$, $X_1^{\ast},\ldots,X_d^{\ast}$), if $G=\Sp(n)$.
 \end{enumerate}
\end{theo}
\bigskip

We can assume that in the formulation of Theorem~\ref{theo_intro1} each of monomials $A,B,C$ is {\it primitive}, i.e., is not equal to a power of a shorter monomial. If $p=0$ or $p>n$, then in Theorem~\ref{theo_intro1} it is enough to take traces $\tr(U)$, where $U$ can be non-primitive, instead of $\si_t(U)$ in order to generate the algebra $R^G$. The corresponding results were obtained earlier than Theorem~\ref{theo_intro1} by Sibirskii~\cite{Sibirskii68} and Procesi~\cite{Procesi76}.


%
%

\begin{remark}
It is not difficult to see that elements from Theorem~\ref{theo_intro1} are in fact invariants. Namely, the action of $G$ on $V$ induces the action on $R$ as follows: $g\cdot x_{ij}(k)$ is the $(i,j)$-th entry of $g^{-1} X_k g$. Given $f\in R$, we have $f\in R^G$ if and only if $g\cdot f=f$ for all $g\in G$. Then the following properties give us the required: $\si_t(g A g^{-1})=\si_t(A)$.
\end{remark}

\section{Identities}\label{section_identities}

To describe relations for $R^G$ we should represent it as a quotiont of the corresponding free algebra $\si\EX$ by the ideal of relations $K_n$. All necessary definitions are given below. 

Let $\X$ be the semigroup (without unity) freely generated by {\it letters}  
\begin{enumerate}
\item[$\bullet$] $x_1,\ldots,x_d$, if $G=GL(n)$;

\item[$\bullet$] $x_1,\ldots,x_d,x_1^T,\ldots,x_d^T$, otherwise.
\end{enumerate} 
Denote $\X^{\#}=\X\sqcup\{1\}$, i.e., we endow $\X$ with the unity.  Assume that $a=a_1\cdots a_r$ and $b$ are elements of $\X$, where $a_1,\ldots,a_r$ are letters. 
\begin{enumerate}
\item[$\bullet$] Introduce the involution ${}^T$ on $\X$ as follows. If $G=GL(n)$, then $a^T=a$. Otherwise, we set $b^{TT}=b$ for a letter $b$ and $a^T=a_r^T\cdots a_1^T\in\X$. 

\item[$\bullet$] We say that $a$ and $b$ are {\it cyclic equivalent} and write $a\stackrel{c}{\sim} b$ if $a=a_1a_2$ and $b=a_2a_1$ for some $a_1,a_2\in\X^{\#}$.  If $a\stackrel{c}{\sim} b$ or $a\stackrel{c}{\sim} b^T$, then we say that $a$ and $b$ are {\it equivalent} and write $a\sim b$.
\end{enumerate}
An element from $\X$ is called {\it primitive} if it is not equal to a power of a shorter monomial.  Obviously, if $a\sim b$ for a primitive $a\in\X$, then $b$ is also primitive. Introduce the natural lexicographical linear order on $\X$ by setting $x_1>x_1^T>x_2>x_2^T>\cdots$ and $ab>a$ for $a,b\in\X$. (Note that we can actually consider any other lexicographical linear order). 
\begin{enumerate}
\item[$\bullet$] Let $\EX\subset\X$ be a subset of maximal (with respect to the introduced lexicographical order on $\X$) representatives of $\stackrel{}{\sim}$-equivalence classes of primitive elements.

\item[$\bullet$] Let $\si\EX$ be the ring with unity of commutative polynomials over $\FF$ freely generated by ``symbolic'' elements $\si_t(a)$, where $t>0$ and $a\in\EX$. 
\end{enumerate}

For a letter $b\in\X$ define
$$X_{b}=
\left\{
\begin{array}{rl}
X_{k},&\text{if } b=x_k\\
X_{k}^T,&\text{if } b=x_k^T \text{ and }G=O(n)\\
X_{k}^{\ast},&\text{if } b=x_k^T \text{ and }G=\Sp(n)\\
\end{array}
\right..
$$
Given $a=a_1\cdots a_r\in\X$, where $a_i$ is a letter, we set $X_{a}=X_{a_1}\cdots X_{a_r}$. Consider the surjective homomorphism of algebras
$$\PhiLarge{n}:\si\EX \to R^G$$ %
defined by $\si_t(a) \to \si_t(X_a)$, if $t\leq n$, and $\si_t(a) \to 0$ otherwise. Note that for all $n\times n$ matrices $A,B$ over $R$ and $1\leq t\leq n$ we have $\si_t(A^{\de})=\si_t(A)$, $(A^{\de})^{\de}=A$, and $(AB)^{\de}=B^{\de}A^{\de}$, where $\de$ stands for the transposition or symplectic transposition. Hence the map $\PhiLarge{n}$ is well defined. Its kernel $K_{n}$ is the ideal of {\it relations} for $R^G$.

To define elements generating $K_n$ we need the algebra  $\si\X$. 
\begin{enumerate}
\item[$\bullet$] Let $\FF\X$ and $\FF\X^{\#}$ be the free associative algebras (without and with unity, respectively) with the $\FF$-bases $\X$ and $\X^{\#}$, respectively. Note that elements of $\FF\X$ and $\FF\X^{\#}$ are {\it finite} linear combinations of monomials from $\X$ and $\X^{\#}$, respectively.

\item[$\bullet$] Let $\si\X$ be a ring with unity of commutative polynomials over $\FF$ freely generated by ``symbolic'' elements $\si_t(a)$, where $t\geq1$ and $a$ ranges over polynomials from $\FF\X$ with coefficient $1$ in the highest term with respect to the introduced lexicographical order on $\X$. Define 
$$\si_t(\al a)=\al^t\si_t(a)$$
for $\al\in\FF$ and denote $\si_t(0)=0$.
\end{enumerate}

We will use the following conventions for elements of $\si\EX$ and $\si\X$: $\si_0(a)=1$ and $\si_1(a)=\tr(a)$. Similarly to $\PhiLarge{n}:\si\EX\to R^G$,  we define the surjective homomorphism of algebras $\PhiAbs{n}:\si\X\to R^G$.

The following lemma implies that any element of $\si\X$ can be considered as an element of $\si\EX$. In what follows, we apply this lemma without a reference to it. See Section~2 of~\cite{Lopatin13JPAA} for the definitions of $F_t$ and $P_{t,l}$.

\begin{lemma}\label{lemma_L}
We have $\si\EX\simeq \si\X/L$ for the ideal $L$ generated by
\begin{enumerate}
\item[(a)] $\si_t(a_1+\cdots+a_s)=F_t(a_1,\ldots,a_s)$, 

\item[(b)] $\si_t(a^l)=P_{t,l}(a)$, 

\item[(c)] $\si_t(bc)=\si_t(cb)$, 

\item[(d)] $\si_t(a^T)=\si_t(a)$ in case $G$ is $O(n)$ or $\Sp(n)$, 
\end{enumerate}
where $t>0$, $l,s>1$, $a_1,\ldots,a_s\in\FF\X$, $a,b,c\in\X$. Moreover,  $L$ belongs to the kernel of $\PhiAbs{n}$. 
\end{lemma}
\bigskip

The isomorphism from Lemma~\ref{lemma_L} was established by Donkin (see~\cite{Donkin93Camb}) in case $G=GL(n)$ and by Lopatin (see Lemma~3.1 of~\cite{Lopatin12ART}) in case $G=O(n)$. The proof for the case of $G=\Sp(n)$ follows immediately from the case of $G=O(n)$.  The second statement of the lemma was proven in Remark~3.6 of~\cite{Lopatin13JPAA}.

Assume that $G$ is $O(n)$ or $\Sp(n)$. Let us recall the definition of element $\si_{t,r}(a,b,c)$ of $\si\X$, where $t,r\geq0$ and $a,b,c\in\FF\X$. For short, we set $x=x_1$, $y=x_2$, and $z=x_3$. Consider the quiver (i.e., the oriented graph) $\Q$:
$$
\loopR{0}{0}{x} %
\xymatrix@C=1cm@R=1cm{ %
\vtx{1}\ar@1@/^/@{<-}[rr]^{y,y^T} &&\vtx{2}\ar@1@/^/@{<-}[ll]^{z,z^T}\\
}%
\loopL{0}{0}{x^T}\qquad\qquad,
$$
where there are two arrows from vertex $2$ to vertex $1$ as well as from $1$ to $2$. By abuse of notation arrows of $\Q$ are denoted by letters from $\X$. For an arrow $a$ denote by $a'$ its head and by $a''$ its tail. A sequence of arrows $a_1\cdots a_s$ of $\Q$ is a {\it path} of $\Q$ if $a_i''=a_{i+1}'$ for all $1\leq i< s$. The head of the path $a$ is $a'=a_1'$ and the tail is $a''=a_s''$. A path $a$ is {\it closed} if $a'=a''$. Denote the multidegree of a monomial $a$ in arrows of $\Q$ by $\mdeg(a)=(\deg_{x}(a) + \deg_{x^T}(a),\deg_{y}(a) + \deg_{y^T}(a),\deg_{z}(a) + \deg_{z^T}(a))$. We set 
$$\si_{t,r}(x,y,z)=\sum (-1)^{\xi} \si_{k_1}(e_1)\cdots \si_{k_q}(e_q),$$
where the sum ranges over all closed paths $e_1,\ldots,e_q$ in $\Q$ that are pairwise different with respect to $\sim$-equivalence and  $k_1,\ldots,k_q>0$ ($q>0$) satisfying  $k_1\mdeg(e_1)+\cdots+k_q\mdeg(e_q)=(t,r,r)$. Here $\xi=t+\sum_{i=1}^q k_i(\deg_{y}{e_i}+\deg_{z}{e_i}+1)$. Given $a,b,c\in\FF\X$ we define $\si_{t,r}(a,b,c)$ as the result of the substitutions $x\to a$, $y\to b$, $z\to c$ in $\si_{t,r}(x,y,z)$.

Part~(a) of the following theorem was proven by Zubkov~\cite{Zubkov96} and part~(b) by Lopatin~\cite{Lopatin12ART},~\cite{Lopatin12JPAA}:

\begin{theo}\label{theo_relationsGLO} The ideal of relations $K_{n}$ for $R^G\simeq  \si\EX/K_{n}$ is generated by 
\begin{enumerate}
\item[(a)] $\si_t(a)$ for $t>n$, if $G=GL(n)$;

\item[(b)] $\sigma_{t,r}(a,b,c)$ for $t+2r>n$ ($t,r\geq0$), if $G=O(n)$ and  $p\neq2$.
\end{enumerate}
Here $a,b,c$ ranges over $\FF\X$.
\end{theo}
\bigskip

Define the element $\varrho_{t,r}(x,y,z)$ of $\si\X$ by
$$\varrho_{t,r}(x,y,z)=\sum (-1)^{t+k_1+\cdots+k_q} \si_{k_1}(e_1)\cdots \si_{k_q}(e_q),$$
where $e_1,\ldots,e_q$, $k_1,\ldots,k_q$ are the same as in the definition of $\si_{t,r}$.

\begin{theo}\label{theo_relationsSp} Assume $p\neq2$ and $G=\Sp(n)$. Then the ideal of relations $K_{n}$ for $R^{\Sp(n)}\simeq  \si\EX/K_{n}$ is generated by $\varrho_{t,r}(a,b,c)$ for $t+2r>n$ ($t,r\geq0$), where $a,b\in\FF\X$ and $c\in\FF\X^{\#}$.
\end{theo}
\bigskip

The key difference of the relations in case $G=O(n)$ and in case $G=\Sp(n)$ is that in the first case the degree of any non-zero relation (from $\si\EX$) is greater than $n$, but in the second case there are non-trivial relations of degree $\frac{n}{2}+1$. The proof of the theorem is given at the end of Section~\ref{section_quivers}.

\begin{remark}\label{remark1} Denote by $\FF_p\subset \FF$ the field of characteristic $p$, generated by $1$. Note that generators of $R^{\Sp(n)}$ as well as elements from the formulation of Theorem~\ref{theo_relationsSp} are defined over $\FF_p$. Hence the standard linear algebra arguments imply that without loss of generality in the proof of Theorem~\ref{theo_relationsSp} we can assume that $\FF$ is algebraically closed. 
\end{remark}
\begin{proof}
This remark follows from the fact that if  $\algA\simeq\algB / K$ for algebras $\algA$ and $\algB$ over $\FF_p$ and an ideal $K$ of $\algB$, then $\algA\otimes\FF\simeq\algB\otimes\FF / K\otimes\FF$, where the tensor product is over $\FF_p$.
\end{proof}

\section{Isomorphism of algebras}\label{section_isomorphism}

In this section we assume that $\FF$ is an arbitrary field and $n$ is even. Let us recall that over a ring of characteristic two a matrix $A$ is called skew-symmetric if $A=-A^T$ and every diagonal element of $A$ is zero. To define a subalgebra $I_n$ of $R\otimes \FF[y_{ij}\,|\,1\leq i<j\leq n]$ we denote by $Y$ the $n\times n$ skew-symmetric matrix with the $(i,j)^{\rm th}$ entry equal to $y_{ij}$ for $i<j$.  The algebra $I_n$ is generated by all $\si_t(A_1 Y\cdots A_r Y)$ for $1\leq t\leq n$, $r>0$, where  $A_i\in\{X_1,\ldots,X_d,X_1^T,\ldots,X_d^T\}$ for all $i$. Consider the homomorphism of algebras
$$\Psi_n:R\otimes \FF[y_{ij}\,|\,1\leq i<j\leq n]\to R$$ 
defined by $\Psi_n(X_k)=X_kJ$ ($1\leq k\leq d$) and $\Psi_n(Y)=-J$. Here $\Psi_n(X_k)$ stands for the $n\times n$ matrix such that its $(i,j)^{\rm th}$ entry is $\Psi_n(x_{ij}(k))$ and $\Psi_n(Y)$ is defined similarly. In what follows, we use similar notations.

\begin{lemma}\label{lemma_iso}
The restriction of $\Psi_n$ to $I_n$ is an isomorphism of algebras $I_n$ and $R^{\Sp(n)}$.
\end{lemma}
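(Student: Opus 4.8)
The plan is to prove that $\Psi_n$ is an isomorphism by establishing separately that it is a well-defined algebra homomorphism, that it is surjective, and that it is injective. The key conceptual point is that conjugation by the symplectic group preserves the skew-symmetric form encoded by $J$, so substituting $Y\to -J$ turns the ``mixed'' invariants built from generic matrices and the generic skew-symmetric matrix $Y$ into genuine $\Sp(n)$-invariants.

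First I would verify that $\Psi_n$ is well defined as a map of algebras. The algebra $I_n$ is generated by the elements $\si_t(A_1 Y\cdots A_r Y)$ with $A_i\in\{X_1,\ldots,X_d,X_1^T,\ldots,X_d^T\}$, so it suffices to check that the images land in $R^{\Sp(n)}$. Under $X_k\to X_k J$ and $Y\to -J$, a factor $A_i Y$ with $A_i=X_k$ maps to $X_k J\cdot(-J)=-X_k J^2$, and since $J^2=-E$ (as $J$ is the standard symplectic form with $JJ^T=E$ and $J^T=-J$), this is simply $X_k$; similarly $A_i=X_k^T$ gives a factor involving $X_k^T J\cdot(-J)=X_k^T$, which one checks equals $X_k^\ast J\cdot J^{-1}$ type rewriting, so that the whole product $A_1 Y\cdots A_r Y$ becomes a monomial in $X_1,\ldots,X_d,X_1^\ast,\ldots,X_d^\ast$. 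Then $\si_t$ of such a monomial is exactly a generator of $R^{\Sp(n)}$ by Theorem~\ref{theo_intro1}(c), so $\Psi_n$ maps generators of $I_n$ to generators of $R^{\Sp(n)}$. This simultaneously gives surjectivity: every generator $\si_t(C)$ of $R^{\Sp(n)}$, where $C$ is a monomial in the $X_k$ and $X_k^\ast$, is hit by reversing these substitutions, inserting $Y$ between consecutive letters and replacing each $X_k^\ast$-factor by the appropriate $X_k^T Y$ pattern.

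The hard part will be injectivity. The natural strategy is to realize both algebras as rings of invariants and identify the map with restriction under a change of variables. Concretely, $I_n$ should be interpreted as a ring of $O(n)$- or $Sp(n)$-type mixed invariants where $Y$ plays the role of the generic alternating form; the substitution $Y\to -J$ corresponds to specializing to the fixed form and passing to the subgroup stabilizing it. I would make this precise by exhibiting a section: define a map in the reverse direction on generators (sending each $\si_t(C)$ back to the corresponding $\si_t(A_1 Y\cdots A_r Y)$) and prove it is well defined, i.e. respects all relations among the $\si_t(C)$. The cleanest route is to track the bijection on monomials: the assignment $C\mapsto A_1 Y\cdots A_r Y$ is a bijection between primitive monomials in $X_k,X_k^\ast$ (up to the equivalence $\sim$) and the relevant $Y$-alternating words, compatible with cyclic equivalence and the involution, so that any polynomial relation holding among the images in $R^{\Sp(n)}$ pulls back to a relation in $I_n$.

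Finally I would assemble these pieces: well-definedness and surjectivity from the direct computation with $J^2=-E$, and injectivity from the explicit inverse on generators together with the observation that the two generating families are in degree- and multidegree-preserving bijection. The main obstacle I anticipate is bookkeeping the symplectic transpose: one must carefully confirm that $X_k^T$ together with the two copies of $J$ coming from the surrounding $Y$'s reproduces exactly $X_k^\ast=-J X_k^T J$, and that this identification is consistent across all positions in a word and stable under the equivalence relations defining $\si\EX$. Once that identity is pinned down, the isomorphism follows formally.
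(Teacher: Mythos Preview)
Your treatment of well-definedness and surjectivity is essentially correct (modulo the sign bookkeeping you flag), and matches the paper in spirit. The injectivity argument, however, has a genuine gap.

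You propose to define an inverse on generators by sending $\si_t(C)$ back to the corresponding $\si_t(A_1 Y\cdots A_r Y)$, and to justify well-definedness by the bijection between primitive monomials in $X_k,X_k^\ast$ and the $Y$-alternating words. But a bijection of generating sets says nothing about relations: to know that your inverse is well defined you must show that every relation among the $\si_t(C)$ in $R^{\Sp(n)}$ is already satisfied by the $\si_t(A_1Y\cdots A_rY)$ in $I_n$. Since the relations in $R^{\Sp(n)}$ are precisely what the paper is trying to determine, this is circular. Concretely, in $R^{\Sp(n)}$ the skew-symmetric matrix has been specialized to $-J$, while in $I_n$ it is the \emph{generic} skew-symmetric matrix $Y$; a priori a relation could hold at the special value $-J$ without holding for all skew-symmetric $Y$.

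The paper resolves this by producing an honest algebra homomorphism in the reverse direction, not a map on generators. It introduces an intermediate algebra $I'_n$ in which $Y$ is replaced by $ZJZ^T$ for a generic matrix $Z$, defines $\mu_n:R^{\Sp(n)}\to I'_n$ by the polynomial substitution $X_k\mapsto Z^T X_k ZJ$ (which is automatically a well-defined ring map, since it is given on coordinates), and then passes from $I'_n$ to $I_n$ via $\theta_n$ sending $ZJZ^T\mapsto Y$. The well-definedness of $\theta_n$ is exactly the step where the field hypotheses enter: over an algebraically closed field with $p\neq 2$, every skew-symmetric matrix is of the form $BJB^T$, so a polynomial vanishing for all $ZJZ^T$ already vanishes for all $Y$. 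One then checks $\Psi_n\circ\theta_n\circ\mu_n=\mathrm{id}$ and $\theta_n\circ\mu_n\circ\Psi_n=\mathrm{id}$. Your outline is missing this factorization idea (or an equivalent substitute), and without it the injectivity claim does not go through.
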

\bigskip
This lemma is proven at the end of this section. Note that $\Psi_n(I_n)\subset R^{\Sp(n)}$. 

Let $I'_n$ be the  subalgebra of $R\otimes \FF[z_{ij}\,|\,1\leq i,j\leq n]$ generated by elements  $\si_t(A_1 Z J Z^T\cdots A_r Z J Z^T)$ for $1\leq t\leq n$, $r>0$, where  $A_i\in\{X_1,\ldots,X_d,X_1^T,\ldots,X_d^T\}$ for all $i$ and $Z=(z_{ij})_{1\leq i,j\leq n}$.

\begin{lemma}\label{lemma_hom}
The exists a unique homomorphism of algebras $\theta_n: I'_n\to I_n$ that sends $\si_t(A_1 Z J Z^T\cdots A_r Z J Z^T)$ to $\si_t(A_1 Y\cdots A_r Y)$ for all $A_i\in\{X_1,\ldots,X_d$, $X_1^T,\ldots,X_d^T\}$ and $1\leq t\leq n$.
\end{lemma}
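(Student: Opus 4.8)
The plan is to construct the map $\theta_n$ explicitly via a specialization of the skew-symmetric matrix $Y$, and then to verify that this specialization is well defined at the level of the generating invariants. The key observation is that the matrix $ZJZ^T$ is skew-symmetric for any $n\times n$ matrix $Z$, since $(ZJZ^T)^T = Z J^T Z^T = -ZJZ^T$ (using $J^T=-J$). Thus there is a natural candidate for a homomorphism from $R\otimes\FF[z_{ij}]$ to $R\otimes\FF[y_{ij}]$ that fixes each generic matrix $X_k$ and sends the entries of $ZJZ^T$ to the corresponding entries $y_{ij}$ of $Y$. Concretely, I would first define the $\FF$-algebra homomorphism on the polynomial ring $R\otimes\FF[z_{ij}]$ that is the identity on $R$ and sends each $z_{ij}$ to a polynomial in the $y_{ij}$ realizing the identity $ZJZ^T \mapsto Y$; more efficiently, I would define the map on the subring $R[ZJZ^T] := R\otimes\FF[(ZJZ^T)_{ij}]$ by sending the $(i,j)^{\rm th}$ entry of $ZJZ^T$ to $y_{ij}$ and then restrict.

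Granting such a ring map $\rho$ with $\rho(X_k)=X_k$, $\rho(X_k^T)=X_k^T$, and $\rho(ZJZ^T)=Y$, the generating invariant $\si_t(A_1 ZJZ^T \cdots A_r ZJZ^T)$ of $I'_n$ maps to $\si_t(A_1 Y\cdots A_r Y)$ automatically, because $\rho$ is an algebra homomorphism fixing the $A_i$ and carrying each factor $ZJZ^T$ to $Y$, and $\si_t$ of a matrix is a polynomial in the matrix entries. This is exactly the prescribed behavior, so $\rho$ restricts to a homomorphism $I'_n\to I_n$ onto the stated generators. Uniqueness is then immediate: the elements $\si_t(A_1 ZJZ^T\cdots A_r ZJZ^T)$ generate $I'_n$ as an algebra by definition, so any homomorphism with the prescribed values on them is determined on all of $I'_n$.

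The main obstacle, and the step deserving the most care, is the \emph{existence} and \emph{well-definedness} of the underlying ring map $\rho$ sending the entries of $ZJZ^T$ to the $y_{ij}$. The subtlety is that the $n^2$ entries of $ZJZ^T$ are not algebraically independent in $\FF[z_{ij}]$ --- they satisfy the skew-symmetry relations $(ZJZ^T)_{ij}=-(ZJZ^T)_{ji}$ and $(ZJZ^T)_{ii}=0$, and potentially further polynomial relations among the off-diagonal entries. To define $\rho$ on the subalgebra they generate, I must check that the assignment respects every such relation: since $Y$ is itself skew-symmetric with $y_{ii}=0$ and $y_{ij}=-y_{ji}$, the skew-symmetry relations are preserved, but I would need to argue that the off-diagonal entries $(ZJZ^T)_{ij}$ for $i<j$ are in fact algebraically independent over $\FF$ (so that sending them freely to the independent variables $y_{ij}$ extends to a homomorphism), or otherwise that every relation they satisfy is also satisfied by the $y_{ij}$. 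The cleanest route is to verify directly that the map $Z\mapsto ZJZ^T$ from $\FF^{n\times n}$ to the space of skew-symmetric matrices is dominant (its image contains all skew-symmetric matrices of full rank, for instance), which forces the induced comorphism $\FF[y_{ij}]\hookrightarrow\FF[z_{ij}]$ to be injective and identifies $y_{ij}$ with $(ZJZ^T)_{ij}$; this injectivity is precisely what guarantees that no spurious relation obstructs the construction.
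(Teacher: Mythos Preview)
Your proposal is correct and rests on exactly the same key fact as the paper's proof: the dominance (in fact surjectivity, over an algebraically closed field of characteristic $\neq 2$) of the morphism $Z\mapsto ZJZ^T$ from $\FF^{n\times n}$ onto the space of skew-symmetric matrices.

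The only difference is organizational. The paper argues directly at the level of relations: if $f=\sum_i\al_i\,\si_{t_{i1}}(a_{i1})\cdots\si_{t_{ir_i}}(a_{ir_i})$ vanishes in $I'_n$, then substituting any $B\in\FF^{n\times n}$ for $Z$ gives zero; since every skew-symmetric $C$ equals $BJB^T$ for some $B$, the corresponding element $h$ with $Y$ in place of $ZJZ^T$ vanishes at every skew-symmetric specialization of $Y$, hence $h=0$. You instead package the same observation by noting that dominance makes the comorphism $\FF[y_{ij}]\hookrightarrow\FF[z_{ij}]$, $y_{ij}\mapsto(ZJZ^T)_{ij}$, injective, so the upper-triangular entries of $ZJZ^T$ are algebraically independent and one can define a ring map $R\otimes\FF[(ZJZ^T)_{ij}]\to R\otimes\FF[y_{ij}]$ which restricts to $\theta_n$. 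This is a mild repackaging, not a different argument; your version has the advantage of producing $\theta_n$ as the restriction of a globally defined ring homomorphism, while the paper's version is slightly more elementary. One small remark: your first sentence about defining the map on all of $R\otimes\FF[z_{ij}]$ by sending each $z_{ij}$ to a polynomial in the $y_{ij}$ does not work as stated (no such assignment will force $ZJZ^T\mapsto Y$), so you should drop it and keep only the ``more efficient'' formulation on the subring $R[(ZJZ^T)_{ij}]$.
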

\begin{proof} Given a monomial $a=A_1 Z J Z^T\cdots A_r Z J Z^T$, we write $\theta_n(a)$ for the monomial $A_1 Y\cdots A_r Y$. Let $f=\sum_i\al_i \si_{t_{i1}}(a_{i1})\cdots\si_{t_{ir_i}}(a_{ir_i})$ be an element of $I'_n$, where $\al_i\in\FF$. Denote by $h$ the element $\sum_i\al_i \si_{t_{i1}}(\theta_n(a_{i1}))\cdots\si_{t_{ir_i}}(\theta_n(a_{ir_i}))$ of $I_n$. To prove the lemma, it is enough to show that if $f=0$, then $h=0$.

Assume that $f=0$. Then the result of substitution $Z\to B$ in $f$ is zero for every $B\in\FF^{n\times n}$. It is well-known that for any skew-symmetric bilinear form $f$ on $\FF^n$ we have $\FF^n=V_0\oplus V_1$, where $V_0$ is the kernel of $f$ and the restriction of $f$ to $V_1$ is given by the matrix $J_{\dim V_1}$ with respect to some basis of $V_1$ (for example, see~\cite{Lang65}). Therefore, for any skew-symmetric $n\times n$ matrix $C$ over $\FF$ there is a  $B\in \FF^{n\times n}$ such that $BJB^T=C$. Thus, 
the result of substitution $Y\to C$ in $h$ is zero for every skew-symmetric matrix $C\in\FF^{n\times n}$. Since $\FF$ is infinite, the last condition implies that $h=0$. 
\end{proof}

Now we can proof Lemma~\ref{lemma_iso}.

\begin{proof}
Define the homomorphism of algebras 
$$\mu_n:R\to R\otimes \FF[z_{ij}\,|\,1\leq i,j\leq n]$$
by $\mu_n(X_k)= Z^T X_k Z J$, $1\leq k\leq n$. Note that $\mu_n(R^{\Sp(n)})\subset I'_n$.
Let us introduce the following notations. Given an $n\times n$ matrix $A$ and $\al\in\{0,1\}$, we set
$$A^{\al}=\left\{
\begin{array}{rl}
A,&\text{if } \al=0\\
A^T,&\text{if } \al=1\\ 
\end{array}\right.,\;\;
A^{\ov{\al}}=\left\{
\begin{array}{rl}
A,&\text{if } \al=0\\
A^{\ast},&\text{if } \al=1\\ 
\end{array}\right..$$ 
Assume that $\al,\al_i\in\{0,1\}$ and $A_i\in\{X_1,\ldots,X_d\}$ for $1\leq i\leq r$. Definitions of $\mu_n$ and $\Psi_n$ imply that
$$\mu_n(X_k^{\ov{\al}})=(-1)^{\al}Z^TX_k^{\al}Z J \;\text{ and }\;
\Psi_n(X_k^{\al}Y)=(-1)^{\al} X_k^{\ov{\al}}.$$
Hence,
$$\begin{array}{rl}
\si_t(A_1^{\ov{\al_1}}\cdots A_r^{\ov{\al_r}})&
\stackrel{\mu_n}{\longrightarrow}
(-1)^{t(\al_1+\cdots+\al_r)} \,\si_t(Z^TA_1^{\al_1}ZJ\cdots Z^TA_r^{\al_r}ZJ)\\
&\stackrel{\theta_n}{\longrightarrow}
(-1)^{t(\al_1+\cdots+\al_r)} \,\si_t(A_1^{\al_1}Y\cdots A_r^{\al_r}Y)\\
&\stackrel{\Psi_n}{\longrightarrow} \si_t(A_1^{\ov{\al_1}}\cdots A_r^{\ov{\al_r}})\qquad\text{and }\\
\end{array}$$
$$\begin{array}{rl}
\si_t(A_1^{\al_1}Y\cdots A_r^{\al_r}Y)&
\stackrel{\Psi_n}{\longrightarrow}
(-1)^{t(\al_1+\cdots+\al_r)} \,\si_t(A_1^{\ov{\al_1}}\cdots A_r^{\ov{\al_r}})\\
&\stackrel{\theta_n\circ\mu_n}{\longrightarrow}
\si_t(A_1^{\al_1}Y\cdots A_r^{\al_r}Y),\\
\end{array}$$
where $\theta_n \circ \mu_n (x)$ stands for $\theta_n(\mu_n(x))$. Hence
\begin{eq}\label{eq_id1}
\Psi_n\circ \theta_n \circ \mu_n \text{ is the identical map on } R^{\Sp(n)} \text{ and }
\end{eq}
\vspace{-0.5cm}
\begin{eq}\label{eq_id2}
\theta_n\circ \mu_n \circ \Psi_n \text{ is the identical map on } I_n. 
\end{eq}%
Lemma~\ref{lemma_iso} is proven.
\end{proof}

\section{Invariants of quivers}\label{section_quivers}

Consider the following quivers: 

$$
\xymatrix@C=1cm@R=1cm{ %
\vtx{1}\ar@1@/^/@{<-}[rr]^{x_1,\ldots,x_d,x_1^T,\ldots,x_d^T} &&\vtx{2}\ar@1@/^/@{<-}[ll]^{y}\\
}%
\qquad\text{ and } \qquad
\xymatrix@C=1cm@R=1cm{ %
\vtx{1}\ar@1@/^/@{<-}[rr]^{x_1,\ldots,x_d,x_1^T,\ldots,x_d^T} &&\vtx{2}\ar@1@/^/@{<-}[ll]^{z,z^T}\\
}.
$$
Denote the left hand side quiver by $\Q_y$ and the right hand side quiver by $\Q_z$. As in Section~\ref{section_identities}, by abuse of notation some arrows of this quivers are denoted by letters from $\X$. Let $\LA \Q_y\RA$ ($\LA \Q_z\RA$, respectively) be the set of all closed paths in $\Q_y$ ($\Q_z$, respectively). By definition, we set $y^T=-y$. Extend the definition of the lexicographical order on $\X$ as follows: $y$ is less than $x_i,x_i^T$ and $z,z^T$ are both less than $x_i,x_i^T$ ($i>0$). Then we define $\stackrel{T}{}$-involution  and $\sim$-equivalence on $\LA \Q_y\RA$ and $\LA \Q_z\RA$ in the natural way.  Considering $\LA \Q_y\RA$ instead of $\X$, we define $\LA \widetilde{\Q_y}\RA$, $\FF\LA \Q_y\RA$, $\si\LA \Q_y\RA$ and $\si\LA \widetilde{\Q_y}\RA$ similarly to $\EX$, $\FF\X$, $\si\X$ and $\si\EX$, respectively (see Section~\ref{section_identities}). The same notions can also be introduced for $\Q_z$. 

We write $I(\Q_z,n)$ for the algebra of polynomial invariants of mixed representations of dimension vector $(n,n)$ of the quiver
$$
\xymatrix@C=1cm@R=1cm{ %
\vtx{1}\ar@1@/^/@{<-}[rr]^{x_1,\ldots,x_d} &&\vtx{2}\ar@1@/^/@{<-}[ll]^{z}\\
}%
$$
with involution that interchange vertices $1$ and $2$ (see~\cite{Zubkov05I} or~\cite{Lopatin09JAlg} for more details).  Zubkov~\cite{Zubkov05I} established that the algebra $I(\Q_z,n)$ is generated by the following elements:  $\si_t(A_1B_1\cdots A_rB_r)$ for $1\leq t\leq n$, $r>0$, where $A_i\in\{X_1,\ldots,X_d,X_1^T,\ldots,X_d^T\}$ and $B_i\in\{Z,Z^T\}$ for all $i$.  

We set $X_y=Y$, $X_z=Z$, and $X_{z^T}=Z^T$, where matrices $Y$ and $Z$ were defined in Section~\ref{section_isomorphism}. Hence $X_a$ is determined for every $a\in\LA \Q_y\RA$ as well as for $a\in\LA \Q_z\RA$ (see also Section~\ref{section_identities}). Define the surjective homomorphisms of algebras 
$$\widetilde{\Phi}_{y,n}:\si\LA \widetilde{\Q_y}\RA \to I_n \text{ and } \widetilde{\Phi}_{z,n}:\si\LA \widetilde{\Q_z}\RA \to I(\Q_z,n)$$ %
by $\si_t(a) \to \si_t(X_a)$, if $t\leq n$, and $\si_t(a) \to 0$ otherwise. Kernels $K_{y,n}$ and $K_{z,n}$, respectively, of these maps are ideals of relations for $I_n$ and $I(\Q_z,n)$, respectively. 

Assume that $u,v$ are verteces of $\Q_y$. Given an $a\in\FF\LA\Q_y\RA$, we have  $a=\sum_i\al_ia_i$ for some $\al_i\in\FF$ and $a_i\in\LA\Q_y\RA$. If $a_i'=u$ and $a_i''=v$ for all $i$, then we say that $a$ is {\it regular} and write $a'=u$ and $a''=v$.

\begin{remark}\label{remark_QL}
As in Lemma~\ref{lemma_L}, any element of $\si\LA \Q_y\RA$ can be considered as an element of $\si\LA \widetilde{\Q_y}\RA$. Namely, assuming that $a_1,\ldots,a_s$ from the definition of $L$ (see Lemma~\ref{lemma_L}) are regular elements of $\FF\LA \Q_y\RA$ with  $a_1'=\cdots=a_s'=a_1''=\cdots=a_s''$ and $a,bc$ are closed paths in $\Q_y$, we define an ideal $L_y$. Then $\si\LA \widetilde{\Q_y}\RA\simeq \si\LA \Q_y\RA/L_y$ and $L_y$ belongs to the kernel of the homomorphism of algebras $\PhiAbs{y,n}:\si\LA \Q_y\RA \to I_n$ that is defined similarly to $\PhiAbs{n}$. The same remark also holds for $\Q_z$.
\end{remark}
\bigskip

A triple $(a,b,c)$ of regular elements from $\FF\LA\Q_y\RA$ is called {\it $\Q_y$-admissible} if $a'=a''=b'=c''=1$ and $b''=c'=2$.

\begin{lemma}\label{lemma_relIn} 
Assume $p\neq2$. Then the ideal of relations $K_{y,n}$ for $I_n$ is generated by $\si_{t,r}(a,b,c)$ for $t+2r>n$ ($t,r\geq0$), where $(a,b,c)$ is an admissible triple of $\Q_y$.
\end{lemma}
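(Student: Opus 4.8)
The plan is to obtain the relations for $I_n=I(\G_y,n)$ by transporting the relations for the companion quiver $I(\G_z,n)$ along the passage from a generic matrix to a skew-symmetric one. The guiding principle is that in $\G_z$ the arrows $z,z^T$ are independent (a genuinely generic matrix $Z$), so $I(\G_z,n)$ is governed by the orthogonal-type machinery, whereas in $\G_y$ the single arrow $y$ satisfies $y^T=-y$, i.e.\ $Y$ is skew-symmetric. The bridge is the elementary fact, already used in the proof of Lemma~\ref{lemma_hom}, that over an algebraically closed field with $p\neq2$ every skew-symmetric matrix has the form $BJB^T$; this is exactly what makes $\theta_n$ an isomorphism and not merely a surjection, and hence what will guarantee that no relation is lost in passing between the two worlds.

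First I would record the relations for $I(\G_z,n)$: the ideal $T_{z,n}$ is generated by the $\si_{t,r}(a,b,c)$ with $(a,b,c)$ an admissible triple of $\G_z$ and $t+2r>n$. Since $Z$ and $Z^T$ are independent, this is the quiver counterpart of the orthogonal description in Theorem~\ref{theo_relationsGLO}(b), and I would either cite it from the theory of mixed quiver invariants or prove it in parallel to the orthogonal case. I would then introduce the surjective homomorphism $\si\LA \widetilde{\G_z}\RA\to\si\LA \widetilde{\G_y}\RA$ that collapses the two arrows of $\G_z$ onto the single skew arrow of $\G_y$, namely $z\mapsto y$ and $z^T\mapsto-y$ (the arrows $x_k,x_k^T$ being fixed). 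The key combinatorial verification here is that this homomorphism carries the generating family $\si_{t,r}(a,b,c)$ of $\G_z$ onto the family $\si_{t,r}(a,b,c)$ of $\G_y$, the signs introduced by $z^T\mapsto-y$ being absorbed into the sign $(-1)^{\xi}$ built into $\si_{t,r}$; this is the same bookkeeping that later turns $\si_{t,r}$ into $\varrho_{t,r}$ under $\Psi_n$.

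The inclusion $\langle\si_{t,r}(a,b,c)\rangle\subseteq T_{y,n}$ is then immediate: the collapsing homomorphism fits into a square with $\pi_{z,n}$, $\pi_{y,n}$ and the skew-specialization $Z\mapsto Y$, so it maps $T_{z,n}$ into $T_{y,n}=\ker\pi_{y,n}$, and by the previous step its images are exactly the $\si_{t,r}(a,b,c)$ for $\G_y$. The reverse inclusion is the substantial one, and here I would use the parametrization $Y=BJB^T$. A relation $f\in T_{y,n}$ holds for every skew-symmetric $Y$; since every such $Y$ equals $BJB^T$ for some $B$, the substitution $Y=BJB^T$ turns $f$ into an identity valid for the \emph{generic} matrix $B$, that is, it pulls the skew relation back into the orthogonal world of a generic matrix. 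There the first step applies and expresses $f$ as a consequence of the $\si_{t,r}(a,b,c)$; transporting back along the collapsing homomorphism then places $f$ in $\langle\si_{t,r}(a,b,c)\rangle$ for $\G_y$, completing the proof.

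The step I expect to be the main obstacle is exactly this reverse inclusion: the faithful pullback of relations through $Y=BJB^T$ and the control of the relations created by restricting to skew-symmetric matrices. A priori the skew case could carry relations invisible for a generic matrix, and the crux is to show, using the surjectivity of $B\mapsto BJB^T$ (which is where $p\neq2$ and algebraic closedness are indispensable), that every such relation is already a consequence of the $\si_{t,r}(a,b,c)$ with $t+2r>n$; this is the genuinely new point beyond the orthogonal setting. A secondary nuisance is the degree and sign accounting in the collapsing homomorphism: although the bound $t+2r>n$ is formally identical to the orthogonal one, after the later application of $\Psi_n$ (which sends $Y\to-J$) these same relations descend to relations of $R^{\Sp(n)}$ of degree as low as $\frac{n}{2}+1$, so the distribution of the degree between the $X_k$ and the $Y$ directions must be tracked with care.
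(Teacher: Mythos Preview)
Your overall architecture is the same as the paper's: pass from $\G_y$ to $\G_z$ by the ``collapsing'' map $z\mapsto y$, $z^T\mapsto -y$, invoke Zubkov's description of $T_{z,n}$, and push the generators $\si_{t,r}(a,b,c)$ forward. The forward inclusion $\langle\si_{t,r}(a,b,c)\rangle\subseteq T_{y,n}$ works exactly as you say.

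The gap is in the reverse inclusion. Your plan is to start from $f\in T_{y,n}$, substitute $Y=BJB^T$, observe that the result vanishes for a generic $B$, and then ``apply the first step.'' But the first step is a statement about the kernel of $\pi_{z,n}:\si\LA\widetilde{\G_z}\RA\to I(\G_z,n)$, and the substitution $Y=BJB^T$ does \emph{not} produce an element of $\si\LA\widetilde{\G_z}\RA$: the monomial $BJB^T$ is not a path in $\G_z$ (there is no arrow $J$, and $B,B^T$ do not fit the head/tail constraints), so Zubkov's generators are simply not available to you at that point. You would need a further argument explaining how a vanishing expression in the $X_k$ and $B$ can be rewritten as a $\si\LA\widetilde{\G_z}\RA$-combination of the $\si_{t,r}$'s, and nothing in your outline supplies that. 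The $BJB^T$ parametrization is what makes $\theta_n$ well defined in Lemma~\ref{lemma_hom}; it does not by itself lift relations.

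The paper's device avoids this entirely and is worth noting: instead of $Y=BJB^T$, lift $f$ by the \emph{linear} substitution $y\mapsto z-z^T$. This lands honestly in $\si\LA\widetilde{\G_z}\RA$, and since $Z-Z^T$ is a generic skew-symmetric matrix the lift lies in $T_{z,n}$; now Zubkov applies. Collapsing back via $z\mapsto y$, $z^T\mapsto -y$ returns $2^k f$ (where $k=\deg_y f$), so with $p\neq 2$ one recovers $f$ in the ideal generated by the $\si_{t,r}(a,b,c)$ for $\G_y$. This is the step you should replace your $BJB^T$ argument with.
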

\begin{proof}
Consider a relation $f\in\si\LA\widetilde{\Q_y}\RA$ for $I_n$. Since $\FF$ is infinite, without loss of generality we can assume that $f$ is multihomogeneous. In particular, each monomial of $f$ has one and the same degree $k\geq0$ in letter $y$. 
Denote by $h\in\si\LA\widetilde{\Q_z}\RA$ the result of substitution $y\to z-z^T$ in $f$. Obvioulsly, $h$ is a relation for $I(\Q_z,n)$.  The general result by Zubkov~\cite{Zubkov05II} implies that the ideal $K_{z,n}$ of relations for $I(\Q_z,n)$ is generated by $\si_{t,r}(a,b,c)$ for $t+2r>n$ and admissible triples $(a,b,c)$ of $\Q_z$. Denote by $l\in\si\LA\widetilde{\Q_y}\RA$ the result of substitution $z\to y$, $z^T\to y^T=-y$ in $h$. Then $l$ belongs to the ideal of $\si\LA\widetilde{\Q_y}\RA$ generated by $\si_{t,r}(a,b,c)$ for $t+2r>n$ and admissible triples $(a,b,c)$ of $\Q_y$. On the other hand, $l=2^k \!f$ and the proof is completed.  
\end{proof}

Recall the notion of a partial linearization $\si_{\un{t};\un{r};\un{s}}$ of $\si_{t,r}$. Assume that $\un{t}\in\NN_0^u$, $\un{r}\in\NN_0^v$, $\un{s}\in\NN_0^w$ satisfy $|\un{r}|=|\un{s}|$ and $a_1,\ldots,a_u,b_1,\ldots,b_v,c_1,\ldots,c_w$ belong to $\FF\X$. Consider  $\si_{t,r}(\al_1 a_1+\cdots + \al_u a_u, \be_1 b_1 + \cdots + \be_v b_v, \ga_1 c_1 + \cdots + \ga_w c_w)\in\si\EX$ as a polynomial in $\al_1,\ldots,\al_u,\be_1,\ldots,\be_v,\ga_1,\ldots,\ga_w$, where $\al_1,\ldots,\ga_w\in\FF$. The coefficient of $\al_1^{t_1}\cdots\al_u^{t_u} \be_1^{r_1}\cdots \be_v^{r_v} \ga_1^{s_1}\cdots \ga_w^{s_w}$ in this polynomial is denoted by
$$\si_{\un{t};\un{r};\un{s}}(\un{a};\un{b};\un{c}) = \si_{\un{t};\un{r};\un{s}}(a_1,\ldots,a_u; b_1,\ldots,b_v; c_1,\ldots,c_w) \in\si\EX.$$ %
Taking $\varrho_{t,r}$ instead of $\si_{t,r}$, we define the partial linearization $\varrho_{\un{t};\un{r};\un{s}}(\un{a};\un{b};\un{c})$ of $\varrho_{t,r}$ in the same way as above. By Lemma~4.2 of~\cite{Lopatin12ART},  there is an explicit formula for $\si_{\un{t};\un{r};\un{s}}$. Namely,
\begin{eq}\label{eq_expl_si}
\si_{\un{t};\un{r};\un{s}}(\un{a};\un{b};\un{c})=\sum (-1)^{\xi} \si_{k_1}(e_1)\cdots \si_{k_q}(e_q).
\end{eq}
Moreover, we have
\begin{eq}\label{eq_expl_varrho}
\varrho_{\un{t};\un{r};\un{s}}(\un{a};\un{b};\un{c})=\sum (-1)^{t+k_1+\cdots+k_q} \si_{k_1}(e_1)\cdots \si_{k_q}(e_q),
\end{eq}
where $e_1,\ldots,e_q,k_1,\ldots,k_q$ are the same as in formula~\Ref{eq_expl_si}.

Since the field $\FF$ is infinite, Theorem~\ref{theo_relationsSp} is equivalent to the following statement.

\begin{cor}\label{cor_relationsSp}
Assume $p\neq2$ and $G=\Sp(n)$. Then the ideal of relations $K_{n}$ for $R^{\Sp(n)}\simeq  \si\EX/K_{n}$ is generated by $\varrho_{\un{t};\un{r};\un{s}}(\un{a};\un{b};\un{c})$ for $|\un{t}| + |\un{r}| + |\un{s}| > n$ ($\un{t}\in\NN_0^u$, $\un{r}\in\NN_0^v$, $\un{s}\in\NN_0^w$), where $a_1,\ldots,a_u,b_1,\ldots,b_v\in\X$ and $c_1,\ldots,c_w\in\X^{\#}$.
\end{cor}
\bigskip

Now we can prove Corollary~\ref{cor_relationsSp} and, therefore,  Theorem~\ref{theo_relationsSp}.
\begin{proof} 
As above, given an $a\in\FF\X$ and $\al\in\{0,1\}$, we set
$$a^{\al}=\left\{
\begin{array}{rl}
a,&\text{if } \al=0\\
a^T,&\text{if } \al=1\\ 
\end{array}\right..
$$
Consider an isomorpfism of algebras 
$$\Psi:\si\LA\widetilde{\Q_y}\RA\to \si\EX,$$
defined by $\si_t(a_1^{\al_1}y\cdots a_s^{\al_s}y)\to (-1)^{t(\al_1+
\cdots+\al_s)} \si_t(a_1^{\al_1}\cdots a_s^{\al_s})$ for all $a_1,\ldots,a_s\in\{x_1,\ldots,x_d\}$ and $\al_1,\ldots,\al_s\in\{0,1\}$ with 
\begin{eq}\label{eq_sec5_1}
a_1^{\al_1}y\cdots a_s^{\al_s}y\text{ from }\LA\widetilde{\Q_y}\RA. 
\end{eq}%
The last condition is equivalent to the condition that $a_1^{\al_1}\cdots a_s^{\al_s}\in\EX$. Since $y^T=-y$, we can eliminate condition~\Ref{eq_sec5_1} from the definition of $\Psi$ (see also Lemma~\ref{lemma_L} and Remark~\ref{remark_QL}). Obviously, $\Psi\circ \PhiLarge{n}=\PhiLarge{y,n}\circ \Psi_n$. Thus Lemma~\ref{lemma_iso} implies that the ideal of relations $K_n$ for $R^{\Sp(n)}$ is equal to $\Psi(K_{y,n})$. The ideal of relations $K_{y,n}$ for $I_n$ was described in Lemma~\ref{lemma_relIn}. Since $\FF$ is infinite, we can rewrite Lemma~\ref{lemma_relIn} as follows: the ideal $K_{y,n}$ is generated by $\si_{\un{t};\un{r};\un{s}}(\un{a},\un{b},\un{c})$ for $|\un{t}|+|\un{r}|+|\un{s}|>n$ ($t,r\geq0$), where $(a_i,b_j,c_k)$ is a $\Q_y$-admissible triple of {\it paths} in $\Q_y$ for all $i,j,k$.

Consider a $\Q_y$-admissible triple $(a,b,c)$ of paths in $\Q_y$, i.e., $a=a_1^{\al_1}y\cdots a_k^{\al_k}y$, $b=b_1^{\be_1}yb_2^{\be_2}\cdots y b_l^{\be_l}$, $c=c_1^{\ga_1}y\cdots c_m^{\ga_m}y$ for $k,l>0$, $m\geq0$, $a_1,\ldots,a_k$, $b_1,\ldots,b_l$, $c_1,\ldots,c_m\in\{x_1,\ldots,x_d\}$ and $\al_1,\ldots,\ga_m\in\{0,1\}$. 

Denote by $\LA X,y\RA^{\#}$ the monoid (with unity),  freely generated by $x_1,\ldots,x_d$, $x_1^T,\ldots,x_d^T$ and $y$, where $y^T=-y$. To compute $\Psi(\si_{t,r}(a,b,c))$, we introduce a homomorphism of monoids $\Psi_0:\LA X,y\RA^{\#} \to \X^{\#}$ defined by $x_i\to x_i$, $x_i^T\to -x_i^T$, $y\to 1$.  Note that
\begin{enumerate}
\item[$\bullet$] $\Psi_0(a)=(-1)^{\al_1+\cdots+\al_k}\, a_1^{\al_1}\cdots a_k^{\al_k}$ and $\Psi_0(a^T)=\Psi_0(a)^T$;

\item[$\bullet$] $\Psi_0(b)=(-1)^{\be_1+\cdots+\be_l}\, b_1^{\be_1}\cdots b_l^{\be_l}$ and $\Psi_0(b^T)=-\Psi_0(b)^T$;

\item[$\bullet$] $\Psi_0(c)=(-1)^{\ga_1+\cdots+\ga_m}\, c_1^{\ga_1}\cdots c_m^{\ga_m}$ and $\Psi_0(c^T)=-\Psi_0(c)^T$.
\end{enumerate}
Since condition~\Ref{eq_sec5_1} may not hold in the definition of $\Psi$, we have $\Psi(\si_t(e))=\si_t(\Psi_0(e))$ for all $e\in\LA \Q_y\RA$. Therefore, the definition of $\si_{t,r}$ implies that
$$\Psi(\si_{t,r}(a,b,c)) = \varrho_{t,r}(\Psi_0(a),\Psi_0(b),\Psi_0(c)).$$
Repeating the above reasoning and applying formulas~\Ref{eq_expl_si} and \Ref{eq_expl_varrho}, we obtain that 
$$\Psi(\si_{\un{t};\un{r};\un{s}}(\un{a};\un{b};\un{c})) =  \varrho_{\un{t};\un{r};\un{s}}(\Psi_0(a_1),\ldots,\Psi_0(a_u); \Psi_0(b_1),\ldots,\Psi_0(b_v); \Psi_0(c_1),\ldots,\Psi_0(c_w))$$
for all $a_1,\ldots,c_w\in\X^{\#}$. The above given reformulation of Lemma~\ref{lemma_relIn} completes the proof.
\end{proof}

\section{Failure of the Weyl property}

\begin{theo}\label{theo_weyl}
Assume that the characteristic of $\FF$ is not two and $G$ is $O(n)$ or $\Sp(n)$. Then $\tr(X_1\cdots X_d)$ is not decomposable in $R^G$ for all $d>0$ if and only if $0<p\leq n$. In particular, property~\Ref{eq_weyl} holds for $R^G$. 
\end{theo}

\section*{Acknowledgements}
This paper was supported by FAPESP No.~2011/51047-1 and RFFI 13-01-00239A. The author is grateful for support.


\end{document}